\tikzset{
  LabelStyle/.style = {minimum width = 2em, 
                        text = red, font = \bfseries },
  VertexStyle/.append style = { inner sep=2pt,
                                font = \Large\bfseries, fill},
  EdgeStyle/.append style = {->, bend left} }
\newtheorem{thm}{Theorem}[section]
\numberwithin{equation}{section} 
\numberwithin{figure}{thm} 
\theoremstyle{plain}
\newtheorem*{thm*}{Theorem}
\theoremstyle{definition}
\theoremstyle{plain}
\newtheorem{thm_A}{Theorem}
\newtheorem*{defn*}{Definition}
\theoremstyle{plain}
\newtheorem*{conj*}{Conjecture}
\newtheorem{conj_A}{Conjecture}
\theoremstyle{plain}
\theoremstyle{plain} 
\theoremstyle{plain}
\theoremstyle{definition}
\newtheorem{ex}[thm]{Example}
\theoremstyle{remark}
\theoremstyle{plain}
\theoremstyle{plain}
\theoremstyle{plain}
\newtheorem{lem}[thm]{Lemma}
\newtheorem*{lem*}{Lemma} 
\theoremstyle{definition}
\newtheorem{defn}[thm]{Definition}
\newtheorem*{acknowledgment*}{Addentum}
\theoremstyle{plain}
\newtheorem*{ex*}{Example}
\theoremstyle{plain}
\begin{document}
\pgfdeclarelayer{background}
\pgfsetlayers{background,main}
\title{An approach to the  Herzog-Sch\"onheim conjecture using automata}
\author{Fabienne Chouraqui}
\date{}
\maketitle
\begin{abstract}
Let $G$ be a group and $H_1$,...,$H_s$ be subgroups of $G$ of  indices $d_1$,...,$d_s$ respectively. In 1974, M. Herzog and J. Sch\"onheim conjectured that if $\{H_i\alpha_i\}_{i=1}^{i=s}$,  $\alpha_i\in G$, is a coset partition of $G$, then $d_1$,..,$d_s$ cannot be distinct. In this paper, we present a   new approach to  the  Herzog-Sch\"onheim conjecture based on automata and present a translation of the conjecture as a problem on automata.
\end{abstract}
\maketitle
\section{Introduction}

Let $G$ be a group and $H_1$,...,$H_s$ be subgroups of $G$.  If there exist  $\alpha_i\in G$ such that $G= \bigcup\limits_{i=1}^{i=s}H_i\alpha_i$, and the sets  $H_i\alpha_i$, $1 \leq i \leq s$,  are pairwise disjoint, then  $\{H_i\alpha_i\}_{i=1}^{i=s}$ is \emph{a coset partition of $G$}  (or a \emph{disjoint cover of $G$}). In this case,    all the subgroups  $H_1$,...,$H_s$ can be assumed to be of  finite index in  $G$ \cite{newman,korec}. We denote by $d_1$,...,$d_s$ the indices of $H_1$,...,$H_s$ respectively. The coset partition $\{H_i\alpha_i\}_{i=1}^{i=s}$ has  \emph{multiplicity} if $d_i=d_j$ for some $i \neq j$. 

\setlength\parindent{10pt}	If $G$ is the infinite cyclic group $\mathbb{Z}$, a coset partition of $\mathbb{Z}$ is 
$\{d_i\mathbb{Z} +r_i\}_{i=1}^{i=s}$, $r_i \in \mathbb{Z}$,  with  
each $d_i\mathbb{Z} +r_i$ the residue class of $r_i$ modulo $d_i$.
These coset partitions of $\mathbb{Z}$ were first introduced by P. Erd$\ddot{o}$s \cite{erdos1} and he conjectured that if $\{d_i\mathbb{Z} +r_i\}_{i=1}^{i=s}$, $r_i \in \mathbb{Z}$, is a  coset partition of $\mathbb{Z}$, then  the largest index $d_s$ appears at least twice.  Erd$\ddot{o}$s' conjecture was 
proved independently by H. Davenport with R.Rado and L. Mirsky with  D. Newman using analysis of complex function \cite{erdos2,newman,znam}. Furthermore, it was proved that  the largest index $d_s$ appears at least $p$ times,  where $p$ is the smallest prime dividing $d_s$ \cite{newman,znam,sun2}, that each index $d_i$  divides another index $d_j$, $j\neq i$, and  that each index $d_k$ that does not properly divide any other index  appears at least twice \cite{znam}. We refer also to \cite{por1,por2,por3,por4,sun3} for more details on  coset partitions of $\mathbb{Z}$ (also called covers of $\mathbb{Z}$ by arithmetic progressions) and to \cite{ginosar} for a proof of the Erd$\ddot{o}$s' conjecture using group representations. 

In 1974, M. Herzog and J. Sch$\ddot{o}$nheim extended Erd$\ddot{o}$s' conjecture for arbitrary groups and conjectured that if $\{H_i\alpha_i\}_{i=1}^{i=s}$,  $\alpha_i\in G$, is a  coset partition of $G$, then $d_1$,..,$d_s$ cannot be distinct. In the 1980's, in a series of papers,  M.A. Berger, A. Felzenbaum and A.S. Fraenkel studied  the Herzog-Sch$\ddot{o}$nheim conjecture \cite{berger1, berger2,berger3} and in \cite{berger4} they proved the conjecture is true for the pyramidal groups, a subclass of the finite solvable groups. Coset partitions of finite groups with additional assumptions on the subgroups of the partition have been extensively studied. We refer to \cite{brodie,tomkinson1, tomkinson2,sun}. In \cite{schnabel}, the authors very recently proved that the conjecture is true for all groups of order less than $1440$. 

	The common approach to the Herzog-Sch$\ddot{o}$nheim (HS) conjecture is to study it in finite groups. Indeed, given any group $G$, every coset partition of $G$ induces a coset partition of a  finite  quotient group of $G$ with the same indices \cite{korec}. In this paper, we present a completely  different approach to the   HS conjecture. The idea is  to  study it in  free groups of finite rank and  from there to provide answers for every group. This is possible since  any finite or finitely generated  group  is a quotient group  of a free group of finite rank and   any coset partition of a quotient group  $F/N$ induces a  coset partition of $F$ with the same indices \cite{chou_hs}.\\ 

In order  to  study the   Herzog-Sch$\ddot{o}$nheim conjecture in  free groups of finite rank, we  use the machinery of covering spaces. A pair $(\tilde{X},p)$ 
is a  \emph{covering space} of  a topological space $X$ if 
$\tilde{X}$ is a path connected  space, 
 $p: \tilde{X}\rightarrow X$ is an open continuous surjection and 
 every $x \in X$ has an open neighborhood $U_x$ such that $p^{-1}(U_x)$ is a disjoint union of  open sets in $\tilde{X}$, each of which is mapped homeomorphically onto $U_x$ by $p$.
 For each $x \in X$, the non-empty set $Y_x=p^{-1}(x)$ is called \emph{the fiber over $x$} and for all $x,x' \in X$, $\mid Y_x \mid =\mid Y_{x'}\mid $. If the cardinal of a fiber is $m$,  one  says that  $(\tilde{X},p)$  is a \emph{$m$-sheeted covering} ($m$-fold cover) of $X$ \cite{hatcher,rotman}.\\
 
  The  fundamental group of the  bouquet with $n$ leaves (or the wedge sum of $n$ circles),  $X$,  is $F_n$, the  free group of finite rank $n$ and for any  subgroup $H$ of $F_n$ of finite index $d$, there exists  a $d$-sheeted covering space  $(\tilde{X}_H,p)$  with a fixed basepoint. The underlying graph 
of $\tilde{X}_H$  is a directed labelled graph, with $d$ vertices, called \emph{the Schreier graph} and it t can be seen as  a finite complete bi-deterministic automaton; fixing the start and the end state at the basepoint, it recognises the set of elements in $H$.  It is   called \emph{the Schreier coset diagram  for $F_n$ relative to the subgroup  $H$} \cite[p.107]{stilwell} or  \emph{the Schreier automaton for $F_n$ relative to the subgroup $H$} \cite[p.102]{sims}. The $d$ vertices (or states) correspond to the $d$ right cosets of $H$,  any edge (or transition) has the form $Hg \xrightarrow{a}Hga$, $g \in F_n$, $a$ a generator of $F_n$, and it describes the right action of $F_n$ on the right cosets of $H$.   If we fix the start state at   the basepoint ($H$),  and the end state at another vertex  $H\alpha$, where $\alpha$  denotes the label of some path from the start state to the end state, then this automaton recognises the set of elements in $H\alpha$ and we call it  \emph{the  Schreier automaton  of $H\alpha$} and denote it by $\tilde{X}_{H\alpha}$.\\

In general, for any  automaton $M$, with alphabet $\Sigma$, and $d$ states,  there exists a  square matrix $A$ of order $d\times d$, with $a_{ij}$ equal to the number of directed edges from  vertex $i$ to vertex $j$, $1\leq i,j\leq d$. This matrix is   non-negative and it is  called  \emph{the transition matrix} \cite{epstein-zwik}. If for every $1\leq i,j\leq d$, there exists $m \in \mathbb{Z}^+$ such that $(A^m)_{ij}>0$, the matrix is \emph{irreducible}. For $A$  an irreducible non-negative matrix, \emph{the period of $A$} is  the gcd of all $m \in \mathbb{Z}^+$ such that $(A^m)_{ii} >0$ (for any $i$).  If  $i$ and $j$ denote respectively the start and end  states of $M$, then the number of  words of length $k$ (in the alphabet $\Sigma$) accepted by $M$ is $a_k=(A^k)_{ij}$. \emph{The generating function of $M$} is defined by  $p(z)=\sum\limits_{k=0}^{k=\infty}a_k\,z^k$.  It is a rational function: the fraction of two polynomials in $z$ with integer coefficients \cite{epstein-zwik}, \cite[p.575]{stanley}.\\

 In \cite{chou-davenport}, we study the properties of the transition matrices and  generating functions of the Schreier automata in the context of coset partitions of the free group.  Let $F_n=\langle \Sigma\rangle$, and $\Sigma^*$ the free monoid  generated by $\Sigma$. Let $\{H_i\alpha_i\}_{i=1}^{i=s}$ be a coset  partition of $F_n$  with $H_i<F_n$ of index $d_i>1$, $\alpha_i \in F_n$, $1 \leq i \leq s$. Let $\tilde{X}_{i}$ denote the  Schreier  graph  of $H_i$, with transition matrix $A_i$ of period $h_i\geq 1$ and $\tilde{X}_{H_i\alpha_i}$  the  Schreier  automaton of $H_i\alpha_i$, with generating function  $p_i(z)$, $1 \leq i\leq s$. For each $\tilde{X}_{i}$,  $A_i$ is a non-negative irreducible matrix and $(A_i^{k})_{bf}$, $k \geq 0$, counts the  number of  words of length $k$ that belong to $H_i\alpha_i\cap \Sigma^*$ (with $b$ and $f$  denoting the start and end state  of $H_i\alpha_i$ respectively).
 Since $F_n$ is the disjoint union of the sets  $\{H_i\alpha_i\}_{i=1}^{i=s}$, each element  in $\Sigma^*$ belongs to one and exactly one such set, so $n^k$, the number of  words of length $k$ in $\Sigma^*$, satisfies $n^k=\sum\limits_{i=1}^{i=s}a_{i,k}$, for every $k \geq 0$, and moreover $\sum\limits_{k=0}^{k=\infty}n^k\,z^k=\sum\limits_{i=1}^{i=s}p_i(z)$. By using this kind of counting argument and studying  the behaviour of the generating functions at their poles, we prove that if $h=max\{h_i \mid 1 \leq i \leq s\}$ is greater than $1$, then there is a repetition of the maximal  period $h>1$ and   that,  under certain conditions, the coset partition has multiplicity. Furthermore, we 
  recover the  Davenport-Rado result (or Mirsky-Newman result) for the Erd\H{o}s' conjecture and some of its consequences. \\

In this paper, we deepen further our study of  the transition matrices of the Schreier automata in the context of coset partitions of $F_n$ and  give some new conditions that ensure a coset partition of $F_n$ has multiplicity.
\begin{thm_A}\label{theo_periods}
	 Let $F_n$ be the free group on $n \geq 2$ generators. Let $\{H_i\alpha_i\}_{i=1}^{i=s}$ be a coset  partition of $F_n$ with $H_i<F_n$ of index $d_i$, $\alpha_i \in F_n$, $1 \leq i \leq s$, and $1<d_1 \leq ...\leq d_s$.  Let $\tilde{X}_{i}$ denote the  Schreier  graph of $H_i$, with  transition matrix   $A_i$, and period $h_i \geq 1$, $1 \leq i\leq s$.
	 	 Let $H=\{h>1 \,\mid\,\exists 1 \leq j\leq s,\,h_j=h\}$. Assume  $H\neq \emptyset$ and different elements in  $H$ are pairwise coprime. Let $r_h=\mid \{1 \leq j\leq s,\mid\,h_j=h\}\mid $, the number of repetitions of $h$.
	 	 	 If for some $h \in H$, $r_h=h$ or $h<r_h\leq 2(h-1)$,  then $\{H_i\alpha_i\}_{i=1}^{i=s}$ has multiplicity.
	 	 
	\end{thm_A}

  Furthermore, we show the Herzog-Sch\"onheim conjecture in free groups can be translated into a  conjecture on automata.\\
  \begin{thm_A}
  If the following conjecture on automata is true:
   \begin{conj_A}
  	Let $\Sigma$ be a finite  alphabet, and $\Sigma^*$ be the free monoid  generated by $\Sigma$. For every $1 \leq i \leq s$, let $M_i$ be a finite,  bi-deterministic and complete automaton with strongly-connected underlying graph. Let  $d_i$ be the number of  states of $M_i$ ($d_i>1$), and  $L_i \subsetneq \Sigma^*$ be the accepted language of $M_i$. If   $\Sigma^*$ is equal to the disjoint union of the $s$ languages  $L_1, L_2,...,L_s$, then there are $1 \leq j,k \leq s$, $j\neq k$, such that $d_j=d_k$.
  \end{conj_A}  
   Then the   Herzog-Sch\"onheim conjecture   is true.
    \end{thm_A}
    







    The paper is organized as follows.  In  Section $2$, we give some preliminaries on automata and on irreducible non-negative matrices. In Section $3$, we present a particular class of automata adapted to the study of the  Herzog-Sch\"onheim conjecture in free groups and describe some of their properties.   In Section $4$, we prove Theorem $1$ and  Theorem $2$.  The last section is an appendix with the proof of Lemma \ref{prop_several-h-prime}.  We  refer to \cite{chou_hs} for more preliminaries and examples: Section 2, for free groups and covering spaces and   Section 3.1, for graphs.    
      
 \section{Automata, Non-negative irreducible matrices}

 \subsection{Automata}\label{subsec_automat}
 We refer the reader to \cite[p.96]{sims}, \cite[p.7]{epstein}, \cite{pin,pin2}, \cite{epstein-zwik}.
 A \emph{finite state automaton} is a quintuple $(S,\Sigma,\mu,Y,s_0)$, where $S$ is a finite set, called the \emph{state set}, $\Sigma$ is a finite set, called the \emph{alphabet}, $\mu:S\times \Sigma \rightarrow S$ is a function, called the \emph{transition function}, $Y$ is a (possibly empty) subset of $S$ called the \emph{accept (or end) states}, and $s_0$ is called the \emph{start state}.  It is a directed  graph with vertices the states and each transition $s \xrightarrow{a} s'$ between states $s$ and $s'$ is an edge with label $a \in \Sigma$. The \emph{label of a path $p$} of length $n$  is the product $a_1a_2..a_n$ of the labels of the edges of $p$.
 The  finite state automaton $M=(S,\Sigma,\mu,Y,s_0)$ is \emph{deterministic} if there is only one initial state and each state is the source of exactly one arrow with any given label from  $\Sigma$. In a deterministic automaton, a path is determined by its starting point and its label \cite[p.105]{sims}. It is \emph{co-deterministic} if there is only one final state and each state is the target of exactly one arrow with any given label from  $\Sigma$. The  automaton $M=(S,\Sigma,\mu,Y,s_0)$ is \emph{bi-deterministic} if it is both deterministic and co-deterministic. An automaton $M$ is \emph{complete} if for each state $s\in S$ and for each $a \in \Sigma$, there is exactly one edge from $s$ labelled $a$. We say that an automaton $M$ is \emph{strongly-connected} if there is a directed path from any state to any other state.
 \begin{defn}
 Let $M=(S,\Sigma,\mu,Y,s_0)$ be  a finite state automaton. Let $\Sigma^*$ be the free monoid generated by $\Sigma$. Let  $\operatorname{Map}(S,S)$ be  the monoid consisting of all maps from $S$ to $S$. The map $\phi: \Sigma \rightarrow \operatorname{Map}(S,S) $ given by $\mu$ can be extended in a unique way to a monoid homomorphism $\phi: \Sigma^* \rightarrow \operatorname{Map}(S,S)$. The range of this map is a monoid called \emph{the transition monoid of $M$}, which is generated by $\{\phi(a)\mid a\in \Sigma\}$. An element $w \in \Sigma^*$ is \emph{accepted} by $M$ if the corresponding element of $\operatorname{Map}(S,S)$, $\phi(w)$,  takes $s_0$ to an element of the accept states set $Y$. The set $ L\subseteq \Sigma^*$  recognized by $M$ is called \emph{the language accepted by $M$}, denoted by $L(M)$.
 \end{defn}
 For any directed  graph with $d$ vertices or any finite state  automaton $M$, with alphabet $\Sigma$, and $d$ states,  there exists a  square matrix $A$ of order $d\times d$, with $a_{ij}$ equal to the number of directed edges from  vertex $i$ to vertex $j$, $1\leq i,j\leq d$. This matrix is   non-negative (i.e $a_{ij}\geq 0$) and it is  called  \emph{the transition matrix} (as in \cite{epstein-zwik}) or  \emph{the adjacency matrix} (as in \cite[p.575]{stanley}). For any $k \geq 1$, $(A^k)_{ij}$ is  equal to the number of directed paths of length $k$   from  vertex $i$ to vertex $j$. So, if   $M$ is a bi-deterministic automaton  with alphabet $\Sigma$, $d$ states, start state $i$, accept state  $j$  and  transition matrix $A$, then   $(A^k)_{ij}$ is  the number of  words of length $k$ in the free monoid $\Sigma^*$  accepted by $M$. 
 \subsection{Irreducible non-negative matrices}
 We refer to \cite[Ch.16]{bellman}, \cite[Ch.8]{meier}. There is a vast literature on the topic. Let $A$ be a transition matrix  of order $d\times d$ of a directed graph or an automaton with $d$ states, as defined in Section 2.2. If for every $1\leq i,j\leq d$, there exists $m_{ij} \in \mathbb{Z}^+$ such that $(A^{m_{ij}})_{ij}>0$, the matrix is \emph{irreducible} and this is equivalent to the graph being strongly-connected.  For $A$  an irreducible non-negative matrix, \emph{the period of $A$} is  the gcd of all $m \in \mathbb{Z}^+$ such that $(A^m)_{ii} >0$ (for any $i$). If the period is $1$, A is called \emph{aperiodic}. In \cite{meier}, an   irreducible and  aperiodic matrix $A$ is  called \emph{primitive} and the period $h$ is called the \emph{index of imprimitivity}.\\
 
 Let A be an irreducible non-negative  matrix of order $d\times d$ with period $h\geq 1$ and spectral radius $r$. Then the Perron-Frobenius theorem states that $r$ is a positive real number and it is a simple eigenvalue of $A$,  $\lambda_{PF}$,  called the \emph{Perron-Frobenius (PF) eigenvalue}. It satisfies $\sum\limits_{i}a_{ij}\leq \lambda_{PF} \leq \sum\limits_{j}a_{ij}$. The matrix $A$ has a right eigenvector $v_R$ with eigenvalue $\lambda_{PF}$ whose components are all positive and likewise, a 
 left eigenvector $v_L$ with eigenvalue $\lambda_{PF}$ whose components are all positive.  Both right and left eigenspaces associated with $\lambda_{PF}$ are one-dimensional. The behaviour of irreducible non-negative matrices depends strongly on whether the matrix is aperiodic or not. 
  \begin{thm}\label{theo_aperiodic}\cite[Ch.8]{meier}
  Let $A$ be  a $d \times d$ irreducible  non-negative matrix of period $h \geq 1$, with PF eigenvalue $\lambda_{PF}$. Let   $v_L$  and  $v_R$ be left and right eigenvectors of $\lambda_{PF}$ whose   components are all positive, with $v_L\,v_R=1$. \\
    If $h=1$,  $\lim\limits_{k \rightarrow\infty}\frac{A^k}{\lambda_{PF}^k}=P$, and if $h>1$, $\lim\limits_{k \rightarrow\infty}\frac{1}{k}\sum\limits_{m=0}^{m=k-1}\frac{A^m}{\lambda_{PF}^m}=P$;   $P=v_R\,v_L$. 
  \end{thm}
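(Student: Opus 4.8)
The plan is to reduce to the case $\lambda_{PF}=1$ by replacing $A$ with $A/\lambda_{PF}$; this scaling fixes the eigenvectors $v_R,v_L$, the period $h$, and the normalization $v_Lv_R=1$, while turning the spectral radius into $1$, so that $A^k/\lambda_{PF}^k$ becomes simply $A^k$. First I would introduce the spectral (Jordan) decomposition of $A$ via its eigenprojections $P_\lambda$ and commuting nilpotent parts $N_\lambda$, which gives, for every $k$,
\[
A^k=\sum_{\lambda}\sum_{j\ge 0}\binom{k}{j}\lambda^{\,k-j}N_\lambda^{\,j}P_\lambda ,
\]
with the convention $N_\lambda^{0}P_\lambda=P_\lambda$. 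The Perron--Frobenius statement quoted in the excerpt already guarantees that $\lambda=1$ is a \emph{simple} eigenvalue with one-dimensional left and right eigenspaces spanned by $v_L$ and $v_R$; hence its eigenprojection $P_{1}$ is the rank-one operator $v_Rv_L/(v_Lv_R)=v_Rv_L=P$, and its nilpotent part vanishes. The entire argument then reduces to controlling the remaining spectral contributions as $k\to\infty$.

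For the case $h=1$ I would use the classical strict-dominance property of primitive matrices (see \cite[Ch.8]{meier}, \cite[Ch.16]{bellman}): the only eigenvalue of modulus $\lambda_{PF}=1$ is $\lambda=1$ itself, and every other eigenvalue satisfies $|\lambda|<1$. Each summand with $\lambda\ne 1$ therefore carries a factor $\binom{k}{j}|\lambda|^{\,k-j}$, which tends to $0$ even in the presence of Jordan blocks because $k^{\,j}|\lambda|^{k}\to 0$. Thus
\[
\frac{A^k}{\lambda_{PF}^k}=P_{1}+\sum_{\lambda\ne 1}\sum_{j\ge 0}\binom{k}{j}\lambda^{\,k-j}N_\lambda^{\,j}P_\lambda\;\longrightarrow\;P_{1}=P .
\]

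For the case $h>1$ the peripheral spectrum is larger: by the Perron--Frobenius theory of periodic irreducible matrices, the eigenvalues of modulus $\lambda_{PF}=1$ are exactly the $h$-th roots of unity $\omega^{0},\omega^{1},\dots,\omega^{h-1}$, where $\omega=e^{2\pi i/h}$, each of them simple. Consequently $A^k=\sum_{t=0}^{h-1}\omega^{\,tk}P_{\omega^{t}}+R_k$, where $R_k$ collects the strictly sub-dominant part and tends to $0$; the leading sum oscillates, so $A^k$ itself need not converge, explaining why the Ces\`aro average is needed. Passing to the mean and using $\frac1k\sum_{m=0}^{k-1}\omega^{\,tm}\to\delta_{t,0}$ together with the decay of $R_k$ annihilates every peripheral term except $t=0$, giving
\[
\frac1k\sum_{m=0}^{k-1}\frac{A^m}{\lambda_{PF}^m}\;\longrightarrow\;P_{1}=P .
\]

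The routine analytic estimates (Jordan decay and root-of-unity averaging) are easy; the genuine content is the description of the peripheral spectrum, namely that it is exactly $\lambda_{PF}$ times the cyclic group of $h$-th roots of unity, with all such eigenvalues simple. This is the part of Perron--Frobenius theory \emph{not} made explicit in the excerpt, and I expect it to be the main obstacle. I would either cite it from \cite[Ch.8]{meier} or reprove it by the standard positivity argument: for an eigenvalue $\lambda_{PF}e^{i\theta}$ with eigenvector $x$, the inequality $\lambda_{PF}|x|\le A|x|$ forces $A|x|=\lambda_{PF}|x|$, so $|x|$ is a multiple of the positive vector $v_R$; setting $D=\operatorname{diag}(x_i/|x_i|)$ then yields $D^{-1}AD=e^{i\theta}A$. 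Iterating this identity shows that the multipliers $e^{i\theta}$ form a finite subgroup of the unit circle that permutes the peripheral spectrum transitively, hence a cyclic group whose order must coincide with the period $h$ defined as a gcd. Making the identity $D^{-1}AD=e^{i\theta}A$ precise, and matching the resulting cyclic order with the gcd-definition of $h$, is the step requiring the most care.
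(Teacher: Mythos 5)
The paper offers no proof of this statement: Theorem \ref{theo_aperiodic} is stated as a quotation from \cite[Ch.8]{meier}, so there is no internal argument to compare against, and your blind proof is essentially the standard one from that very source. Your argument is correct: the spectral resolution $A^k=\sum_{\lambda}\sum_{j}\binom{k}{j}\lambda^{k-j}N_\lambda^{j}P_\lambda$, the identification $P_{1}=v_Rv_L$ from simplicity of $\lambda_{PF}$ and the normalization $v_Lv_R=1$, strict dominance $|\lambda|<\lambda_{PF}$ off the peripheral spectrum in the primitive case, and the root-of-unity Ces\`aro averaging $\frac{1}{k}\sum_{m=0}^{k-1}\omega^{tm}\to\delta_{t,0}$ (plus the fact that the Ces\`aro mean of the decaying remainder $R_k$ vanishes) in the periodic case all go through as written. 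The one step you leave unexecuted --- that the peripheral spectrum is exactly $\lambda_{PF}$ times the cyclic group of $h$-th roots of unity, each eigenvalue simple, with the cyclic order equal to the gcd-defined period $h$ --- is correctly flagged as the real content; your Wielandt-type sketch ($\lambda_{PF}|x|\le A|x|$ forced to equality by the positive left eigenvector, $D=\operatorname{diag}(x_i/|x_i|)$, $D^{-1}AD=e^{i\theta}A$, then a trace argument matching the group order with the gcd of return times) is the standard way to close it, and citing \cite[Ch.8]{meier} for this lemma instead would be no weaker than what the paper itself does.
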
    
 
 \section{A particular class of automaton adapted to the study of the HS conjecture}
 \subsection{The Schreier automaton of a coset of a subgroup}
   
  We now introduce the particular class of automata we are interested in, that is    \emph{the Schreier automaton for $F_n$ relative to the subgroup $H$} \cite[p.102]{sims}, \cite[p.107]{stilwell}. We refer to \cite{chou_hs} for concrete examples.
  \begin{defn}\label{def_Schreier-graph}
   Let $F_n=\langle\Sigma\rangle$ and  $\Sigma^*$ the free monoid generated by $\Sigma$.  Let $H<F_n$ of index $d$. Let $(\tilde{X}_H,p)$ be the covering  of the $n$-leaves bouquet with basepoint $\tilde{x}_1$ and  vertices  $\tilde{x}_1, \tilde{x}_2,...,\tilde{x}_{d}$. Let  $t_i \in \Sigma^*$ denote the label of a directed path of minimal length from $\tilde{x}_1$ to $\tilde{x}_i$. Let $\mathscr{T}=\{1, t_i\mid 1 \leq i\leq d\}$.  Let $\tilde{X}_H$ be the Schreier coset diagram  for $F_n$ relative to the subgroup  $H$,  with $\tilde{x}_1$ representing the subgroup $H$ and the other vertices $\tilde{x}_2,...,\tilde{x}_{d}$ representing the cosets  $Ht_i$ accordingly.  We call $\tilde{X}_{H}$  \emph{the  Schreier graph  of $H$},  with this correspondence between the vertices  $\tilde{x}_1, \tilde{x}_2,...,\tilde{x}_{d}$ and the cosets  $Ht_i$ accordingly.  
     \end{defn} 
     From its definition, $\tilde{X}_{H}$  is  a  strongly-connected  $n$-regular graph. So, its transition  matrix $A$ is non-negative and irreducible, with PF eigenvalue $n$ (the sum of the elements at each row and at each column  is equal to $n$). 
        
  \begin{defn}\label{def_Schreier-automaton}
   Let $F_n=\langle\Sigma\rangle$ and $\Sigma^*$ the free monoid generated by $\Sigma$.  Let $H<F_n$ of index $d$. Let  $\tilde{X}_{H}$  be the  Schreier graph  of $H$. Using the notation from Defn. \ref{def_Schreier-graph}, let $\tilde{x}_1$ be the start state and $\tilde{x}_f$ be the end state for some $1 \leq f \leq d$. We call  the automaton obtained  \emph{the  Schreier automaton   of $Ht_f$}   and denote it by  $\tilde{X}_{Ht_f}$. The language accepted by $\tilde{X}_{Ht_f}$ is  the set of elements in $\Sigma^*$  that belong to $Ht_f$. We call the elements in  $\Sigma^*\cap Ht_f$,  \emph{the positive words in $Ht_f$}. The identity may belong to  this set.
     \end{defn} 
  \begin{ex}\label{ex_automaton_index4}
   Let  $\Sigma=\{a,b\}$;  $F_2=\langle a, b \rangle$.  Let $K\leq F_2$, of index $4$. 
  \begin{figure}[H] 
     \centering \scalebox{0.7}[0.5]{\begin{tikzpicture}
     \SetGraphUnit{4}
      \tikzset{VertexStyle/.append  style={fill}}
       \Vertex[L=$K$, x=-3,y=0]{A}
       \Vertex[L=$Ka$, x=0, y=0]{B}
     
     \Vertex[L=$Ka^{2}$, x=3, y=0]{C}
      \Vertex[L=$Ka^{3}$, x=6, y=0]{D}
     \Edge[label = a, labelstyle = above](A)(B)
      \Edge[label = a, labelstyle = above](B)(C)
       \Edge[label = a, labelstyle = above](C)(D)
      \Edge[label = b, labelstyle = below](D)(A)
   \tikzset{EdgeStyle/.style = {->}}    
    \Edge[label =b, labelstyle = below](A)(B)
   \Edge[label = b, labelstyle = below](B)(C)
     \Edge[label = b, labelstyle = below](C)(D)
  
  \tikzset{EdgeStyle/.style = {->, bend right}} 
  \Edge[label = a, labelstyle = above](D)(A)
     \end{tikzpicture}}
     \caption{The Schreier graph $\tilde{X}_K$  of $K= \langle a^4,b^4,ab^{-1},a^2b^{-2},a^3b^{-3} \rangle$.}  \label{fig_aut2}
  \end{figure}
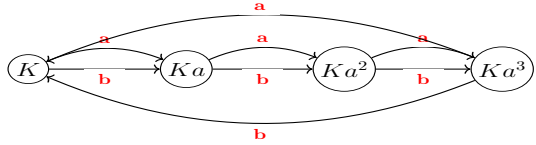The transition matrix of $\tilde{X}_K$ is  $ \left( \begin{array}{cccc}
                         0 & 2 & 0 & 0 \\
                          0 & 0 & 2 & 0\\
                          0 &  0 & 0 & 2\\
  						2 & 0  & 0 & 0
                           \end{array} \right)$ 
 with  period  $4$. If $K$ and $Ka$ are the start and end states, $L$ is  the set of positive words in $Ka$. 
  \end{ex}
  \subsection{Properties of the Schreier automata in coset partitions}
We recall here some results proved in \cite{chou-davenport}.  \begin{thm}\label{theo-repete-h}\cite{chou-davenport}
 Let $F_n$ be the free group on $n \geq 1$ generators. Let $\{H_i\alpha_i\}_{i=1}^{i=s}$ be a coset  partition of $F_n$ with $H_i<F_n$ of index $d_i$, $\alpha_i \in F_n$, $1 \leq i \leq s$, and $1<d_1 \leq ...\leq d_s$.  Let $\tilde{X}_{i}$ denote the  Schreier  graph of $H_i$, with  transition matrix   $A_i$, and period $h_i \geq 1$, $1 \leq i\leq s$. Let $1 \leq k,m\leq s$.
\begin{enumerate}[(i)]
\item Assume $h_k>1$, where  $h_k=max\{h_i \mid 1 \leq i \leq s\}$.   Then there exists $j\neq k$ such that  $h_j=h_k$. 
\item Let $h_{\ell}>1$, such that $h_{\ell}$ does not properly divide any other period $h_i$, $ 1 \leq i \leq s$.  Then there exists  $j\neq \ell$ such that  $h_j=h_{\ell}$.
\item  For every $h_i$, there exists  $j\neq i$ such that  either $h_i=h_j$ or $h_i\mid h_j$.
\end{enumerate}
  \end{thm}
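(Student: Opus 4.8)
The plan is to run a Mirsky--Newman / Davenport--Rado type argument at the poles of the generating functions $p_i(z)$, with the periods $h_i$ playing the role that the moduli $d_i$ play for covers of $\mathbb{Z}$. The starting point is the identity recorded in the introduction: since $\{H_i\alpha_i\}_{i=1}^{i=s}$ is a coset partition, counting positive words of each length gives $n^k=\sum_{i=1}^s a_{i,k}$ for all $k\ge 0$, hence the meromorphic identity
\[
\frac{1}{1-nz}=\sum_{i=1}^{s}p_i(z),
\]
an equality of rational functions on $\mathbb{C}$. The left-hand side has a single pole, at $z=1/n$; the whole proof consists of locating, under each failure hypothesis, a point on the circle $\lvert z\rvert=1/n$ other than $1/n$ at which exactly one $p_i$ is singular, forcing an uncancelled pole on the right and a contradiction.

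First I would pin down the poles of $p_i$ of smallest modulus. Each $A_i$ is non-negative, irreducible, with PF eigenvalue $n$ and period $h_i$, so by Perron--Frobenius theory its peripheral spectrum is exactly $\{n\omega:\omega^{h_i}=1\}$, each such eigenvalue simple; consequently the poles of $p_i$ on $\lvert z\rvert=1/n$ are exactly the simple poles at $z=\omega/n$, $\omega^{h_i}=1$, and all other poles satisfy $\lvert z\rvert>1/n$. The essential point, and the main obstacle, is that each of these peripheral poles is genuine, i.e.\ has nonzero residue. Here the cyclic (bi-deterministic) structure enters: writing the cyclic classes $C_0,\dots,C_{h_i-1}$ of $A_i$ with the start state in $C_0$ and the end state in $C_{c_i}$, the coefficient $a_{i,k}=(A_i^k)_{bf}$ vanishes unless $k\equiv c_i\pmod{h_i}$, while Theorem \ref{theo_aperiodic} gives $a_{i,k}/n^k\to \gamma_i:=h_i\,(v_R)_{b}(v_L)_{f}>0$ along that progression. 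Matching this leading behaviour to the partial-fraction expansion of $p_i$ shows the residues at the peripheral poles are, up to a common factor, the inverse discrete Fourier transform of the indicator of a single residue class mod $h_i$; this transform never vanishes, so every peripheral residue has modulus $\gamma_i/h_i>0$.

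With nonvanishing residues in hand, the three statements follow from one observation: for a primitive $h$-th root of unity $\omega$ with $h>1$, the point $z_0=\omega/n$ is a pole of $p_m$ iff $h\mid h_m$, whereas $\frac{1}{1-nz}$ is analytic at $z_0$ (since $\omega\neq 1$). For (i) take $h=h_k$ the maximal period and $\omega$ primitive of order $h$; if $h_k$ were not repeated, then $h\mid h_m$ together with $h_m\le h$ forces $h_m=h$, so only $p_k$ is singular at $z_0$, an uncancelled pole, a contradiction. For (ii) use the same point with $h=h_\ell$: if $h_\ell\mid h_m$ and $m\neq\ell$, then either $h_m=h_\ell$ (repetition) or $h_\ell$ properly divides $h_m$, both excluded, so again only $p_\ell$ is singular. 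For (iii), if some $h_i>1$ had no $j\neq i$ with $h_i\mid h_j$, then the primitive $h_i$-th root would make $p_i$ the unique singular term, a contradiction; and the case $h_i=1$ is immediate, since $s\ge 2$ and $1\mid h_j$.

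Finally I would note that for $n=1$ this specializes to the classical Erd\H{o}s cover of $\mathbb{Z}$, recovering the Davenport--Rado / Mirsky--Newman theorem, so the periods $h_i$ are the correct analogue of the moduli. The delicate step throughout is the residue nonvanishing of the previous paragraph; once that is secured, the root-of-unity bookkeeping reduces to elementary divisibility.
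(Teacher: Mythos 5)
Your proposal is correct and follows essentially the same route as the paper's own proof: Theorem \ref{theo-repete-h} is quoted from \cite{chou-davenport}, whose method, as described in the introduction, is precisely the counting identity $\sum_{k\geq 0} n^k z^k=\sum_{i=1}^{s}p_i(z)$ together with the behaviour of the generating functions at their poles on the circle $\lvert z\rvert =1/n$, i.e.\ your uncancelled-pole argument at roots of unity, with the Perron--Frobenius peripheral spectrum $\{n\omega : \omega^{h_i}=1\}$ and the residue nonvanishing secured by the support of the coefficients $a_{i,k}$ on a single residue class modulo $h_i$. Your discrete-Fourier justification that every peripheral residue is nonzero is exactly the delicate step that makes this work, and the root-of-unity divisibility bookkeeping for parts (i)--(iii) matches the cited proof.
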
 

	If   $n=1$ in Theorem \ref{theo-repete-h}, $\{H_i\alpha_i\}_{i=1}^{i=s}$  is a coset  partition of $\mathbb{Z}$ and  we recover the  Davenport-Rado result (or Mirsky-Newman result) for the Erd\H{o}s' conjecture and some of its consequences. Indeed, for every index $d$, the Schreier graph of $d\mathbb{Z}$ has a transition matrix with period equal to $d$, so a repetition of the period is equivalent to a repetition of the index. For the  unique subgroup $H$ of $\mathbb{Z}$ of  index $d$,  its Schreier graph  $\tilde{X}_{H}$ is a closed directed path of length $d$ (with each edge labelled $1$). So, its  transition matrix   $A$ is the permutation matrix corresponding to the $d-$cycle $(1,2,...,d)$, and it has   period $d$.  In particular, the period of $A_s$ is $d_s$, and  there exists $j\neq s$ such that $d_j=d_s$.  Also, if the period (index) $d_k$ of $A_k$ does not properly divide any other period (index),  then  there exists  $j\neq k$ such that $d_j=d_k$. For the free groups in general, we prove that in some cases, the repetition of the period implies the repetition of the index (see \cite{chou-davenport}).

\section{Proof of the main results}

\subsection{Properties of the transition matrix of the Schreier graph}
We study the properties of  the transition matrix of a Schreier graph.
 \begin{lem}\label{lem_matrixP}
Let $H < F_n$ of index $d$, with Schreier graph $\tilde{X}_H$ and transition matrix $A$ with period $h\geq 1$.  Then the following   properties hold:
 \begin{enumerate}[(i)]
\item  The vectors  $v_L=\frac{1}{d}(1,1,...,1)$, $v_R=(1,1,...,1)^T$  are left and right eigenvectors of $n$ whose components are all positive, with $v_Lv_R=1$.
\item  The  matrix $P=v_R\,v_L$ is   of order $d\times d$ with all entries equal $\frac{1}{d}$.
 \item If $h=1$, then  $\lim\limits_{k \rightarrow\infty}\frac{A^k}{n^k}=P$ and if $h>1$, then  $\lim\limits_{k \rightarrow\infty}\frac{1}{k}\sum\limits_{j=1}^{j=k}\frac{A^j}{n^j}=P$.
 \end{enumerate}
  \end{lem}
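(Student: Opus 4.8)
The plan is to dispatch the three parts in order, observing that (i) and (ii) are direct verifications from the combinatorial structure of the Schreier graph, while (iii) is an immediate specialization of Theorem \ref{theo_aperiodic} once its hypotheses are matched to the present setting.

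For (i), the starting point is the fact already recorded after Definition \ref{def_Schreier-graph}: the Schreier graph $\tilde{X}_H$ is $n$-regular, so every row and every column of the transition matrix $A$ sums to $n$. Indeed, from each vertex (coset) there is exactly one outgoing edge labelled by each of the $n$ generators, and since the right action of each generator permutes the $d$ cosets bijectively, each vertex is also the target of exactly $n$ edges. The constant row sums give $A v_R = n v_R$ for $v_R=(1,\dots,1)^T$, because $(Av_R)_i=\sum_j a_{ij}=n$; the constant column sums give $v_L A = n v_L$ for $v_L=\frac{1}{d}(1,\dots,1)$, because $(v_L A)_j=\frac{1}{d}\sum_i a_{ij}=\frac{n}{d}=n\,(v_L)_j$. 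Both vectors have strictly positive entries, and $v_L v_R=\frac{1}{d}\sum_{i=1}^{d}1=1$. I would also recall that $n$ is the Perron--Frobenius eigenvalue: the bound $\sum_i a_{ij}\leq \lambda_{PF}\leq \sum_j a_{ij}$ forces $\lambda_{PF}=n$ once all row and column sums equal $n$. For (ii), it then suffices to compute the outer product entrywise: $(P)_{ij}=(v_R v_L)_{ij}=(v_R)_i (v_L)_j=1\cdot\frac{1}{d}=\frac{1}{d}$ for all $i,j$, so $P$ is the $d\times d$ matrix all of whose entries equal $\frac{1}{d}$.

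For (iii), I would invoke Theorem \ref{theo_aperiodic} with $\lambda_{PF}=n$ and with the eigenvectors $v_L,v_R$ produced in (i), which already satisfy the required normalization $v_L v_R=1$. The theorem yields directly $\lim_{k\to\infty}A^k/n^k=P$ when $h=1$, and $\lim_{k\to\infty}\frac{1}{k}\sum_{m=0}^{k-1}A^m/n^m=P$ when $h>1$. The only point needing a remark is the cosmetic mismatch between the summation range $\sum_{m=0}^{k-1}$ in Theorem \ref{theo_aperiodic} and the range $\sum_{j=1}^{k}$ in the present statement. Their difference equals $\frac{1}{k}\bigl(I-A^k/n^k\bigr)$, and since every entry of $A^k/n^k$ lies in $[0,1]$ (each $(A^k)_{ij}$ counts words of length $k$ from $i$ to $j$, hence is at most the total number $n^k$ of words of length $k$), this difference tends to the zero matrix as $k\to\infty$; the two Cesàro averages therefore share the limit $P$.

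All of the above is essentially routine, so I do not anticipate a genuine obstacle. The substance of the argument is entirely front-loaded into (i): verifying $n$-regularity, producing the positive left and right eigenvectors for $n$, and normalizing them so that Theorem \ref{theo_aperiodic} applies verbatim. If there is any delicate point at all, it is only the bookkeeping in (iii) to confirm that the shifted summation index does not affect the limit, which the boundedness estimate above settles.
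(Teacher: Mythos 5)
Your proof is correct and follows essentially the same route as the paper: row and column sums equal $n$ give $\lambda_{PF}=n$ with the stated eigenvectors, the outer product yields $P$, and Theorem \ref{theo_aperiodic} delivers part (iii). Your only addition is the careful reconciliation of the summation ranges $\sum_{m=0}^{k-1}$ versus $\sum_{j=1}^{k}$ via the estimate $\frac{1}{k}\bigl(I-A^k/n^k\bigr)\to 0$, a minor discrepancy the paper passes over in silence, so this is a welcome touch of rigor rather than a different argument.
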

 \begin{proof}
$(i)$, $(ii)$, $(iii)$  As the sum of every row and every column in $A$ is equal to $n$, $\lambda_{PF}=n$ with right eigenvalue  $v_R=(1,1,...,1)^T$ and left eigenvalue $(1,1,...,1)$. Since $(1,1,...,1)v_R=d$,  $v_L=\frac{1}{d}(1,1,...,1)$  is a left eigenvector that satisfies $v_Lv_R=1$. Computing  $v_R\,v_L$ gives    the matrix $P$  of order $d\times d$ with all entries equal $\frac{1}{d}$.  $(iii)$ results from Theorem \ref{theo_aperiodic}.
\end{proof}
The behaviour of exponents of an aperiodic $d \times d$ matrix of a Schreier graph $\tilde{X}_H$ is well known:  for every $1 \leq i,j \leq d$, $\lim\limits_{k \rightarrow\infty}\frac{(A^k)_{ij}}{n^k}=\frac{1}{d}$, from  Lemma \ref{lem_matrixP}. It means that the proportion of positive words of every length $k$ ($k$ large enough) that belong to any coset of $H$ tends to the fixed value $\frac{1}{d}$. We turn now to the study of  $\lim\limits_{k \rightarrow\infty}\frac{(A^k)_{ij}}{n^k}$, where $A$ is the transition matrix of a Schreier graph $\tilde{X}_H$ of period $h> 1$.
 \begin{defn}
 	For $1 \leq k,l \leq d$, we define  $m_{ij}$,  $0 \leq m_{ij} \leq h$, to  be the  minimal natural number  such that $(A^{ m_{ij}})_{ij} \neq 0$.
 \end{defn} 
 By definition,  if $i \neq j$, then $m_{ij}$ is  the  minimal length of a directed  path from   $i$ to $j$ in $\tilde{X}_H$ and if $i=j$, then $m_{ij}=0$. Whenever $h>1$,  only for the exponents $m_{ij}+kh$, $k\geq 0$,  $(A^{ m_{ij}+kh})_{ij} \neq 0$, that is  only positive words of length $m_{ij}+kh$ are accepted by the Schreier automaton, with  $i$ and $j$  the start and end states respectively.   Note that if $H$ is a subgroup of $\mathbb{Z}=\langle 1 \rangle$ of index $d$,  its transition matrix $A$ is a permutation matrix with  period $d$ and $m_{ij}=r$, where $d\mathbb{Z}+r$ is the coset with  $i$ and $j$  the start and end states respectively.  
  \begin{lem}\label{lem-period}
   Let $H < F_n$ of index $d$, with Schreier graph $\tilde{X}_H$ and transition matrix $A$ with period $h> 1$. Then,  the following   properties hold:
   \begin{enumerate}[(i)]
   \item $\lim\limits_{k\rightarrow\infty}\frac{(A^{k})_{ij}}{n^{k}}=0$,  whenever 
  $k \not\equiv  m_{ij}(mod\,h)$,  $1 \leq i,j \leq d$.
       \item  $\lim\limits_{k\rightarrow\infty}\frac{(A^{k})_{ij}}{n^{k}}=\frac{h}{d}$, whenever   $k \equiv  m_{ij}(mod\,h)$,  $1 \leq i,j \leq d$.
       	\item for every $0 \leq m \leq h-1$, there is $i$ such that $m_{1i}\equiv m(mod\, h)$.
         \item  $h$ divides $d$. 
         
    \end{enumerate}
   \end{lem}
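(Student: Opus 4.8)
The plan is to exploit the cyclic (Frobenius) normal form of the irreducible matrix $A$ of period $h$. First I would record the standard structural fact (see \cite[Ch.8]{meier}): since $A$ is irreducible of period $h>1$, the state set $\{1,\dots,d\}$ partitions into $h$ nonempty \emph{cyclic classes} $C_0,\dots,C_{h-1}$ such that every edge of $\tilde{X}_H$ runs from $C_a$ to $C_{a+1\,(\mathrm{mod}\,h)}$. Relabelling so that the start state lies in $C_0$, a directed path from $i\in C_{a}$ to $j\in C_{b}$ can exist only if its length is $\equiv b-a\pmod h$; hence $(A^k)_{ij}\neq 0$ forces $k\equiv b-a\equiv m_{ij}\pmod h$, and in particular $m_{ij}\equiv a_j-a_i\pmod h$, where $a_i,a_j$ denote the classes of $i,j$. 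This already yields $(i)$: if $k\not\equiv m_{ij}\pmod h$, then $(A^k)_{ij}=0$ for every such $k$, so the limit is $0$.

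Next I would prove $(iv)$, together with the equality of class sizes, directly from the $n$-regularity. Since all row and column sums of $A$ equal $n$, the number of edges from $C_a$ to $C_{a+1}$ equals $n\,|C_a|$ (counting out-edges, all of which land in $C_{a+1}$) and also $n\,|C_{a+1}|$ (counting in-edges, all of which come from $C_a$); hence $|C_a|=|C_{a+1}|$ for every $a$. Therefore all $h$ classes have the same cardinality $d/h$, which is a positive integer, giving $(iv)$. For $(iii)$, each class $C_m$ is nonempty, so for a given residue $m$ I may choose any $i\in C_m$ and obtain $m_{1i}\equiv m\pmod h$.

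For $(ii)$ I would reduce to the aperiodic machinery already in place. In the cyclic normal form $A^h$ is block diagonal with diagonal blocks $B_a=(A^h)|_{C_a}$, and each $B_a$ is irreducible and \emph{aperiodic}: a closed path at $i\in C_a$ has length a multiple of $h$, and the $\gcd$ of these lengths is exactly $h$, so the corresponding return lengths in $B_a$ have $\gcd 1$. The PF eigenvalue of $B_a$ is $n^h$, with right eigenvector $\mathbf{1}$ and left eigenvector $\tfrac{1}{|C_a|}\mathbf{1}$ on $C_a$ (the restrictions of $v_R,v_L$ from Lemma \ref{lem_matrixP}). Applying Theorem \ref{theo_aperiodic} (the $h=1$ case) to $B_a$ gives, for $i,j$ in the same class $C_a$,
\[
\lim_{\ell\to\infty}\frac{(A^{h\ell})_{ij}}{n^{h\ell}}=\frac{1}{|C_a|}=\frac{h}{d}.
\]
To pass to arbitrary $i,j$ and to the full progression $k=m_{ij}+h\ell$, I would write $(A^{m_{ij}+h\ell})_{ij}=\sum_{p\in C_{a_j}}(A^{m_{ij}})_{ip}\,(A^{h\ell})_{pj}$, where the sum ranges over $C_{a_j}$ because all other summands vanish. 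Dividing by $n^{m_{ij}+h\ell}$, letting $\ell\to\infty$, using the displayed limit for each $p\in C_{a_j}$ and the row-sum identity $\sum_{p}(A^{m_{ij}})_{ip}=n^{m_{ij}}$, gives $\lim_{k\to\infty}\tfrac{(A^k)_{ij}}{n^k}=\tfrac{h}{d}$, which is $(ii)$.

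The step I expect to require the most care is the structural input: verifying the cyclic decomposition with all $h$ classes nonempty, and checking that the diagonal blocks $B_a$ of $A^h$ are genuinely primitive with PF eigenvalue $n^h$. Once this is secured, parts $(i)$, $(iii)$, $(iv)$ are essentially immediate, and $(ii)$ reduces to the aperiodic limit of Theorem \ref{theo_aperiodic} applied blockwise, combined with an elementary interchange of limit and finite sum.
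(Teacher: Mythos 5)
Your proof is correct, but it takes a genuinely different route from the paper's. The paper argues by a global counting argument tied to the coset structure: it forms the $d\times h$ matrix $B$ whose $(i,j)$ entry is the limiting proportion $\lim_{k\to\infty}(A^{j+kh})_{1i}/n^{j+kh}$, observes that each row has exactly one non-zero entry while each column sums to $1$ (because the $d$ cosets of $H$ partition $F_n$, so $\sum_f (A^k)_{1f}=n^k$ for every $k$), and then extracts (ii)--(iv) from the combinatorics of this matrix: equal non-zero entries, an equal number of them per column, hence $h\mid d$ and the common value $h/d$. You instead work structurally with the Frobenius cyclic decomposition: the partition into cyclic classes $C_0,\dots,C_{h-1}$ gives (i) and (iii) immediately; double-counting edges between consecutive classes using the $n$-regularity of the Schreier graph gives $|C_a|=|C_{a+1}|$, hence (iv) with $|C_a|=d/h$; and (ii) follows by applying Theorem \ref{theo_aperiodic} (the aperiodic case) to the primitive diagonal blocks $B_a$ of $A^h$, then shifting along the progression $k=m_{ij}+h\ell$ via the convolution identity $(A^{m_{ij}+h\ell})_{ij}=\sum_{p\in C_{a_j}}(A^{m_{ij}})_{ip}(A^{h\ell})_{pj}$ and the row-sum identity. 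Your route buys two things the paper leaves implicit: it \emph{proves} that the limits $\lim_{\ell\to\infty}(A^{m+h\ell})_{1i}/n^{m+h\ell}$ exist (the paper's matrix $B$ presupposes this, and it is exactly the blockwise primitivity of $A^h$ that justifies it), and it establishes (ii) for arbitrary start state $i$, not only $i=1$. What the paper's counting argument buys in exchange is brevity and reusability: the same matrix-of-proportions device is redeployed almost verbatim for the partition matrices $C$ and $D$ in Lemmas \ref{prop_one-h} and \ref{prop_several-h-prime}, where several subgroups interact and no single cyclic decomposition is available. Two minor points to tighten in your write-up: the irreducibility of each block $B_a$ deserves the one-line argument (for $p,q\in C_a$ any path from $p$ to $q$ has length divisible by $h$, so some $(A^{h\ell})_{pq}>0$), and you should note that the row and column sums of $B_a$ equal $n^h$ because $A^h$ has row and column sums $n^h$ and is block diagonal with respect to the classes, which is what legitimizes taking $\mathbf{1}$ and $\tfrac{1}{|C_a|}\mathbf{1}$ as the PF eigenvectors.
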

 \begin{proof}
  $(i)$ By definition, whenever $k \not\equiv m_{ij}(mod\,h)$, $(A^{k})_{ij}=0$.   \\
  $(ii)$,  $(iii)$, $(iv)$   We define a $d \times h$ matrix $B$ in the following way. Each row $i$ is labelled by  a right coset of $H$  in the same order as they appear in the rows and columns of $A$ and each column by  $m=0,1,2,...,h-1$, and $(B)_{ij} =  \lim\limits_{k\rightarrow\infty}\frac{(A^{j+kh})_{1i}}{n^{j+kh}}$. Roughly, $(B)_{ij}$ is the proportion of positive words of very large length that belong to the corresponding coset of $H$. From $(i)$: 
    \begin{equation*}
       (B)_{ij} = \begin{cases}
                 0 & \text{if  } m_{1i}\not\equiv j (mod \,h),\\
                 \lim\limits_{k\rightarrow\infty}\frac{(A^{m_{1i}+kh})_{1i}}{n^{m_{1i}+kh}} & \text{if } m_{1i}\equiv j (mod \,h).
             \end{cases}
   \end{equation*}
    So, at each row of $B$, there is a single non-zero entry. As $F_n$ is partitioned by the $d$ cosets of $H$, all the non-zero elements in $B$ are equal and for every $k\geq 0$ and every $1 \leq i \leq d$, $\sum\limits_{f=1}^{f=d}(A^k)_{if}=n^k$, in particular   $\sum\limits_{f=1}^{f=d}\frac{(A^k)_{1f}}{n^k}=1$. So,
    $\sum\limits_{i=1}^{i=d}(B)_{ij}=\sum\limits_{i=1}^{i=d} \lim\limits_{k\rightarrow\infty}\frac{(A^{j+kh})_{1i}}{n^{j+kh}}=$
    $\lim\limits_{k\rightarrow\infty}\sum\limits_{i=1}^{i=d} \frac{(A^{j+kh})_{1i}}{n^{j+kh}}=1$, that is the sum of elements in each column  of $B$ is equal to $1$.
     If $h=d$, $B$ is a square matrix and the right cosets can be arranged such that their $m$ is in growing order and we have necessarily a diagonal matrix (otherwise there would be a column of zeroes).  So, $ \lim\limits_{k\rightarrow\infty}\frac{(A^{m_{1i}+kh})_{1i}}{n^{m_{1i}+kh}} =1$ and $(ii),(iii),(iv)$ hold. Now, assume $d>h$. At each column, there is at least one non-zero entry, so $(iv)$ holds.  Furthermore, the number of non-zero entries in each column needs to be the same, so  $h$ divides $d$ and for any $i$, 
    $ \frac{d}{h}*(\lim\limits_{k\rightarrow\infty}\frac{(A^{m_{1i}+kh})_{1i}}{n^{m_{1i}+kh}})=1$. That is, $\lim\limits_{k\rightarrow\infty}\frac{(A^{m_{1i}+kh})_{1i}}{n^{m_{1i}+kh}}=\frac{h}{d}$. Furthermore,  $\lim\limits_{k\rightarrow\infty}\frac{1}{h}\sum\limits_{j=0}^{j=h-1}\frac{(A^{j+kh})_{1i}}{n^{j+kh}}=
    \frac{1}{d}$.
   \end{proof}

  \subsection{Conditions that ensure  multiplicity in a coset partition}
   Let $F_n$ be the free group on $n \geq 1$ generators. Let $\{H_i\alpha_i\}_{i=1}^{i=s}$ be a coset  partition of $F_n$ with $H_i<F_n$ of index $d_i$, $\alpha_i \in F_n$, $1 \leq i \leq s$, and $1<d_1 \leq ...\leq d_s$.  Let $\tilde{X}_{i}$ denote the  Schreier  graph of $H_i$, with  transition matrix   $A_i$, and period $h_i \geq 1$, $1 \leq i\leq s$.  In the following lemmas, we prove,  under these assumptions, there exist conditions that ensure multiplicity.
\begin{lem}\label{prop_one-h}
	   Assume there exists a unique  $h>1$. Let $r$ denote the number of repetitions of $h$.
	 Then, $r\geq h$. Furthermore, if $r=h$ of if $h<r\leq 2(h-1)$,  then $\{H_i\alpha_i\}_{i=1}^{i=s}$ has multiplicity.
	\end{lem}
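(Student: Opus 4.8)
The plan is to extract all the needed numerical constraints from the single counting identity $n^k=\sum_{i=1}^s a_{i,k}$ recorded in the introduction, where $a_{i,k}=(A_i^k)_{1,f_i}$ counts the positive words of length $k$ in $H_i\alpha_i$ (the start state is the basepoint $1$ and $f_i$ is the end state). Dividing by $n^k$ gives $1=\sum_{i=1}^s a_{i,k}/n^k$ for every $k$. The hypothesis that $h$ is the \emph{unique} period exceeding $1$ means every $h_i\in\{1,h\}$, so I would split the sum accordingly and let $k\to\infty$ along a fixed residue class $k\equiv c\pmod h$. By Lemma \ref{lem_matrixP}(iii) each aperiodic term tends to $1/d_i$, and by Lemma \ref{lem-period}(i),(ii) each period-$h$ term tends to $h/d_i$ when $c\equiv m_i\pmod h$ (writing $m_i$ for the offset $m_{1,f_i}$ of $H_i\alpha_i$) and to $0$ otherwise. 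Since the sum is finite, the limit may be passed inside, yielding for every $c$
\[
1 = S_0 + h\,T_c, \qquad S_0 := \sum_{i:\,h_i=1}\frac{1}{d_i}, \quad T_c := \sum_{i:\,h_i=h,\ m_i\equiv c}\frac{1}{d_i}.
\]
The crucial observation is that both $1$ and $S_0$ are independent of $c$, so $T_c=V:=(1-S_0)/h$ takes the \emph{same} value for every residue class $c\in\{0,\dots,h-1\}$.

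To obtain $r\geq h$, I would note that since $h$ occurs as a period there is at least one index $i$ with $h_i=h$; its term contributes $1/d_i>0$ to $T_{m_i}$, whence $V=T_{m_i}>0$. But then $T_c=V>0$ for \emph{every} $c$, which forces each of the $h$ residue classes to contain at least one period-$h$ subgroup, since an empty class would give $T_c=0$. As the $r$ period-$h$ subgroups are distributed among these $h$ nonempty classes, $r\geq h$ follows immediately.

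For the multiplicity statements I would read off the index of any subgroup lying alone in its class: if a class $c$ contains exactly one period-$h$ subgroup $H_i$, then $1/d_i=T_c=V$, so $d_i=1/V$ is forced — the same value for every such singleton class. If $r=h$, then since all $h$ classes are nonempty and their total count is $h$, each class is a singleton; hence all $r=h\geq 2$ period-$h$ subgroups share the common index $1/V$, giving multiplicity. If instead $h<r\leq 2(h-1)$, let $a$ be the number of singleton classes and $b=h-a$ the number containing at least two subgroups; then $r\geq a+2b=2h-a$, so $a\geq 2h-r\geq 2$. Thus at least two distinct classes are singletons, and the two subgroups occupying them have the identical index $1/V$, again giving multiplicity.

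The one delicate point, and the step I expect to require the most care, is the limit interchange in the first paragraph: one must take limits along each residue class $k\equiv c\pmod h$ \emph{separately} (the full limit $k\to\infty$ does not exist for a periodic transition matrix), and verify that $m_i$ selects a single residue for each period-$h$ coset — precisely the content of Lemma \ref{lem-period}(i)–(ii) — and that the aperiodic contributions are genuinely $c$-independent. Once the identity $T_c=V$ for all $c$ is established, the remainder is elementary: note that $d_i=1/V$ is automatically a positive integer because it is the index of an actual subgroup, so no integrality check is needed, and both $r\geq h$ and the two multiplicity cases reduce to the pigeonhole estimates above. I therefore expect the main obstacle to be bookkeeping rather than anything structurally hard.
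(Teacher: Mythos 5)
Your proof is correct and follows essentially the same route as the paper: your residue-class sums $T_c$ are exactly the column sums of the matrix $C$ the paper constructs (rows indexed by the period-$h$ cosets, columns by residues mod $h$), and your pigeonhole count $a \geq 2h-r \geq 2$ for singleton classes is precisely the paper's bound $n_0 \geq 2h-r$ on columns with a single non-zero entry. The limit interchange along residue classes that you flag as delicate is handled in the paper exactly as you describe, via Lemma \ref{lem_matrixP}(iii) for the aperiodic terms and Lemma \ref{lem-period}(i)--(ii) for the period-$h$ terms.
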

	\begin{proof}
For every $1\leq i \leq s$,  $(A_i^k)_{1f_i}$ denotes the number of positive words of length $k$ that belong to the coset $H_i\alpha_i$. 	Let $I=\{1 \leq i\leq s \mid h_i=h\}$ and for every $i \in I$,  $m_i$ the minimal natural number $(mod \,h)$ such that $(A_i^{m_i})_{1f_i}\neq 0$.  We define a $r \times h$ matrix $C$ in the following way. Each row $i$ is labelled by  a right coset $H_i\alpha_i$, where $i \in I$  and each column by  $m=0,1,2,...,h-1$, and: 
	   \begin{equation*}
	      (C)_{ij} = \begin{cases}
	                0 & \text{if  } m_{i}\not\equiv j (mod \,h),\\
	                \lim\limits_{k\rightarrow\infty}\frac{(A_i^{m_i+hk})_{1f_i}}{n^{m_i+hk}} & \text{if } m_{i}\equiv j (mod \,h).
	            \end{cases}
	  \end{equation*}
	  Roughly, $(C)_{ij}$ is the proportion of positive words of very large length that belong to  $H_i\alpha_i$, where $i \in I$. At each row of $C$ there is a unique non-zero entry. 
	  Since $\{H_i\alpha_i\}_{i=1}^{i=s}$ is  a coset  partition of $F_n$, for very $k$,  $\sum\limits_{i=1}^{i=s}(A_i^k)_{1f_i}=n^k$, that is $\sum\limits_{i=1}^{i=s}\frac{(A_i^k)_{1f_i}}{n^k}=1$. If $A_i$ is  aperiodic, then $\lim\limits_{k\rightarrow\infty}\frac{(A_i^k)_{1f_i}}{n^k}=\frac{1}{d_i}$ from Lemma \ref{lem_matrixP}. So, 
	  $1= \lim\limits_{k\rightarrow\infty}\sum\limits_{i=1}^{i=s}\frac{(A_i^k)_{1f_i}}{n^k}=
	  \sum\limits_{i=1}^{i=s}\lim\limits_{k\rightarrow\infty}\frac{(A_i^k)_{1f_i}}{n^k}=
	  \sum\limits_{i \notin I}\frac{1}{d_i}+\sum\limits_{i \in I}\lim\limits_{k\rightarrow\infty}\frac{(A_i^k)_{1f_i}}{n^k}$.
	  That is, $\sum\limits_{i \in I}\lim\limits_{k\rightarrow\infty}\frac{(A_i^k)_{1f_i}}{n^k}=1-\sum\limits_{i \notin I}\frac{1}{d_i}=\sum\limits_{i \in I}\frac{1}{d_i}$, since  $\sum\limits_{i=1}^{i=s}\frac{1}{d_i}=1$. So,  the sum of elements in each column of $C$ is equal to $\sum\limits_{i\in I}\frac{1}{d_i}$ and  from Lemma \ref{lem-period},  the non-zero entries  in $C$  have the form $\frac{h}{d_i}$. If $r<h$, then there is necessarily a column of zeroes, so $r \geq h$. If $r=h$, then $C$ is a square matrix and the right cosets can be arranged such that their $m$ is in growing order and we have necessarily a diagonal matrix (otherwise there would be a column of zeroes).  So, for every $i \in I$, $ \frac{h}{d_i}= \sum\limits_{i\in I}\frac{1}{d_i}$. That is,   the coset partition $\{H_i\alpha_i\}_{i=1}^{i=s}$ has multiplicity with all the $d_i$ equal for $i \in I$.   
	    Now, assume $r>h$. At each column, there is at least one non-zero entry and there are necessarily columns with several non-zero entries. By some simple combinatorics, $n_0$, the number of columns with a single non-zero entry  satisfies  $h-(r-h)\leq n_0\leq h-1$,  that is $2h-r \leq n_0 \leq h-1$. If  we assume $r\leq 2(h-1)$, then $n_0\geq 2h-r-2(h-1) \geq 2$, that is  the number of columns with a single non-zero entry is at least  $2$, so there are at least two $i \in I$, such that  $ \frac{h}{d_i}= \sum\limits_{i\in I}\frac{1}{d_i}$, that is the coset partition $\{H_i\alpha_i\}_{i=1}^{i=s}$ has multiplicity.   Note that for every $0 \leq m\leq h-1$, there is $i$ such that $m_i\equiv m (mod \,h)$.	
	  \end{proof}
	  
	 	  \begin{lem}\label{prop_several-h-prime}
	 	     Assume there exists two   $h,h'>1$ and $h$ and $h'$ are coprime. 	Let $I=\{1 \leq i\leq s \mid h_i=h\}$,  $r=\mid I \mid$; $I'=\{1 \leq i\leq s \mid h_i=h'\}$, $r'=\mid I'\mid$. 
	 	  If $r=h$ or $r\leq 2(h-1)$ or $r'=h'$ or  $r'\leq 2(h'-1)$,  then $\{H_i\alpha_i\}_{i=1}^{i=s}$ has multiplicity.
	 	  \end{lem}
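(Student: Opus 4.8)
The plan is to reduce the statement to Lemma \ref{prop_one-h} by using the coprimality of $h$ and $h'$ together with the Chinese Remainder Theorem to decouple the contributions of the two families of periodic cosets. Throughout, for $1\le i\le s$ write $a_{i,k}=(A_i^{k})_{1f_i}$ for the number of positive words of length $k$ in $H_i\alpha_i$, set $I_1=\{1\le i\le s\mid h_i=1\}$, and recall that the only periods exceeding $1$ are $h$ and $h'$, so every index $i$ lies in exactly one of $I_1$, $I$, $I'$. Since the partition gives $\sum_{i=1}^{s}a_{i,k}=n^{k}$ for every $k$, we have the normalized identity $\sum_{i=1}^{s}\frac{a_{i,k}}{n^{k}}=1$ for all $k$.

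First I would pass to the limit along a fixed residue class of $k$ modulo $hh'$. Fix $m$ with $0\le m\le hh'-1$ and let $k\to\infty$ through $k\equiv m\pmod{hh'}$. By Lemma \ref{lem_matrixP}, each aperiodic coset contributes $\frac{a_{i,k}}{n^{k}}\to\frac1{d_i}$; by Lemma \ref{lem-period}(i)--(ii), a coset in $I$ contributes $\frac{h}{d_i}$ if $m_i\equiv m\pmod h$ and $0$ otherwise, while a coset in $I'$ contributes $\frac{h'}{d_i}$ if $m_i\equiv m\pmod{h'}$ and $0$ otherwise. Writing $S=\sum_{i\in I_1}\frac1{d_i}$, and, for $a\in\mathbb{Z}_h$ and $b\in\mathbb{Z}_{h'}$,
\[
X_a=\sum_{i\in I,\ m_i\equiv a\,(h)}\frac{h}{d_i},\qquad Y_b=\sum_{i\in I',\ m_i\equiv b\,(h')}\frac{h'}{d_i},
\]
the identity becomes $S+X_{(m\bmod h)}+Y_{(m\bmod h')}=1$. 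Because $\gcd(h,h')=1$, as $m$ ranges over $0,\dots,hh'-1$ the pair $(m\bmod h,\,m\bmod h')$ ranges over all of $\mathbb{Z}_h\times\mathbb{Z}_{h'}$, so $X_a+Y_b=1-S$ holds for every pair $(a,b)$.

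The crucial step, and the one genuinely new relative to Lemma \ref{prop_one-h}, is the \emph{separation of variables}: since $X_a+Y_b$ is independent of $(a,b)$, fixing $b$ shows $X_a$ is the same for all $a$, and fixing $a$ shows $Y_b$ is the same for all $b$; call these common values $X$ and $Y$. Summing $X_a=X$ over $a\in\mathbb{Z}_h$ gives $hX=\sum_{i\in I}\frac{h}{d_i}$, hence $X=\sum_{i\in I}\frac1{d_i}$, and symmetrically $Y=\sum_{i\in I'}\frac1{d_i}$. At this point the $h$-family is in exactly the situation of Lemma \ref{prop_one-h}: forming the $r\times h$ matrix $C$ whose $(i,j)$ entry records the limiting proportion of $H_i\alpha_i$ (for $i\in I$) among words of length $\equiv j\pmod h$, each row of $C$ has a single non-zero entry equal to $\frac{h}{d_i}$, and every column sum equals $X=\sum_{i\in I}\frac1{d_i}$, which is precisely the conclusion that drove the combinatorics there. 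An identical construction applies to $I'$ with common column sum $Y$.

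Finally I would invoke the combinatorial argument of Lemma \ref{prop_one-h} verbatim on each family. Positivity of the column sums forces every residue class modulo $h$ to be hit, whence $r\ge h$ (and likewise $r'\ge h'$). If $r=h$ then $C$ is square and, after reordering the cosets by $m_i$, diagonal, so all the $\frac{h}{d_i}$ ($i\in I$) coincide and the corresponding $d_i$ are equal; if $h<r\le 2(h-1)$ then the counting bound $2h-r\le n_0\le h-1$ on the number $n_0$ of columns with a single non-zero entry gives $n_0\ge 2$, producing at least two cosets in $I$ with $\frac{h}{d_i}=X$ and hence equal index. Either way $\{H_i\alpha_i\}_{i=1}^{i=s}$ has multiplicity, and by the symmetric argument the same conclusion follows from $r'=h'$ or $r'\le 2(h'-1)$. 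The main obstacle is the separation-of-variables step: it is exactly where coprimality is used, since without $\gcd(h,h')=1$ the pairs $(m\bmod h,\,m\bmod h')$ would not fill $\mathbb{Z}_h\times\mathbb{Z}_{h'}$ and one could not conclude that the column sums are independent of the residue. I would also check that decomposing the limit into per-coset contributions is legitimate, which is immediate here since the sum is finite and each summand converges along the chosen subsequence.
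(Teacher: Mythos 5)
Your proof is correct and follows essentially the same route as the paper: both pass to limits along residue classes, use the coprimality of $h$ and $h'$ to decouple the two periodic families so that each one gets its own matrix $C$ with all column sums equal to $\sum_{i\in I}\frac{1}{d_i}$ (resp. $\sum_{i\in I'}\frac{1}{d_i}$), and then invoke the counting combinatorics of Lemma \ref{prop_one-h} ($r\geq h$; diagonal square matrix when $r=h$; at least two single-entry columns when $h<r\leq 2(h-1)$). The only difference is presentational: where the paper assembles a single $(r'+r)\times 2hh'$ matrix $D$ and compares the columns $m+kh$ and $m+kh'$ starting from a chosen mixed column, you encode the identical coprimality fact as the CRT bijection $m\mapsto(m\bmod h,\,m\bmod h')$ and separate variables in the identity $X_a+Y_b=1-S$, a slightly cleaner rendering that also sidesteps the paper's (unjustified but harmless) selection of a minimal column meeting both families.
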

The proof of Lemma \ref{prop_several-h-prime} appears in the appendix. We prove there that  coprime periods  can be considered independently, that is each period can be assigned  its own matrix $C$ as defined in the proof of Lemma \ref{prop_one-h}.  The situation is different if $h$ and  $h'$ are not coprime. Indeed, consider the following  coset partition of $F_2$: $F_2=H\cup Ka\cup Ka^3$, where $K$ is the subgroup described in Example \ref{ex_automaton_index4} and $H=\langle a^2,b^2,ab\rangle\ < F_2$ of index $2$. The period of  $\tilde{X}_{H}$ is $h'=2$ and     the period of  $\tilde{X}_{K}$ is $h=4$  and the corresponding  matrix $D$ as defined in the proof of Lemma \ref{prop_several-h-prime} is $D=$  $ \left( \begin{array}{cccc}
	 	  	   	 	  	                            1 & 0 & 1 & 0 \\
	 	  	   	 	  	                             0 & 1 & 0 & 0\\
	 	  	   	 	  	                             0 &  0 & 0 & 1
	 	  	   	 	  	           \end{array} \right)$, with  the first row 
	 	  	   	 	  	           labelled $H$, the second row $Ka$,  the third row $Ka^3$ and at each column $0\leq m\leq 3$. So, if $h'$ divides $h$, each period cannot have  its own matrix $C$. Yet, using the same kind of arguments as before, it is not difficult to prove that $r \geq h-\frac{h}{h'}r'$ and that if $r \leq 2(h-\frac{h}{h'}r'-1)$ then the coset partition has multiplicity. We now turn to the proof of Theorem $1$. 	
	 	  	   \begin{proof}[Proof of Theorem \ref{theo_periods}]
	 	  		 We assume  $H=\{h>1 \,\mid\,\exists 1 \leq j\leq s,\,h_j=h\}\neq \emptyset$ and different elements in  $H$ are pairwise coprime. Let $r_h=\mid \{1 \leq j\leq s,\,h_j=h\}\mid $, the number of repetitions of $h$. From the proof of Lemma \ref{prop_several-h-prime},  each period has its own matrix $C$ and we apply the results of Lemma \ref{lem-period}. That is, if for some $h \in H$, $r_h=h$ or $h<r_h\leq 2(h-1)$,  then $\{H_i\alpha_i\}_{i=1}^{i=s}$ has multiplicity.
	 	  	   \end{proof}

\subsection{Translation of the HS conjecture in terms of automata}

 Let $F_n=\langle \Sigma\rangle$, and $\Sigma^*$ the free monoid  generated by $\Sigma$. Let $\{H_i\alpha_i\}_{i=1}^{i=s}$ be a coset  partition of $F_n$  with $H_i<F_n$ of index $d_i>1$, $\alpha_i \in F_n$, $1 \leq i \leq s$. Let $\tilde{X}_{i}$ be the  Schreier  automaton of $H_i\alpha_i$, with  language $L_i=\Sigma^*\cap H_i\alpha_i$.  
  \begin{proof}[Proof of Theorem $2$]
 Assume  Conjecture $1$ is true. For every $1 \leq i \leq s$, the  Schreier  automaton $\tilde{X}_{i}$ is a finite,  bi-deterministic and complete automaton with strongly-connected underlying graph and alphabet $\Sigma$.  Since $F_n$ is the disjoint union of the sets  $\{H_i\alpha_i\}_{i=1}^{i=s}$, each word  in $\Sigma^*$ belongs to one and exactly one such language, so  $\Sigma^*$ is the disjoint union of the $s$ languages  $L_1, L_2,...,L_s$. Since Conjecture $1$ is true, there is a repetition of the number of states and this implies the coset  partition $\{H_i\alpha_i\}_{i=1}^{i=s}$ has multiplicity, that is the  HS conjecture in free groups of finite rank is  true. From \cite[Thm.6]{chou_hs}, this implies the HS conjecture is true for all the  finitely generated groups, in particular for all the finite groups. So,  the  HS conjecture is true for all the groups.
  \end{proof}
Note that these two  conjectures would have been  equivalent if the conditions of Conjecture 1 implied  the existence of a coset partition of the free group of rank $\mid\Sigma\mid$, which does not seem to be true. Nevertheless, any finite,  bi-deterministic,  complete and  strongly-connected automaton $M$ with $d$ states  can be considered as  the  Schreier  automaton of a subgroup $H$ of index $d$ in $F_{\mid\Sigma\mid}$. 

 

  \newpage
\section*{Appendix: Proof of Lemma \ref{prop_several-h-prime}}
   \begin{lem*}
 	 	     Assume there exists two   $h,h'>1$ and $h$ and $h'$ are coprime. 	Let $I=\{1 \leq i\leq s \mid h_i=h\}$,  $r=\mid I \mid$; $I'=\{1 \leq i\leq s \mid h_i=h'\}$, $r'=\mid I'\mid$. 
 	 	  If $r=h$ or $r\leq 2(h-1)$ or $r'=h'$ or  $r'\leq 2(h'-1)$,  then $\{H_i\alpha_i\}_{i=1}^{i=s}$ has multiplicity.
 	 	  \end{lem*}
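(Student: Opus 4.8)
The plan is to reduce the two-period situation to two independent copies of the single-period analysis carried out in the proof of Lemma~\ref{prop_one-h}. The new phenomenon to establish is that, because $h$ and $h'$ are coprime, the periodic contributions of the two families $I$ and $I'$ do not interfere, so each of the two periods may be assigned its own matrix $C$ exactly as in Lemma~\ref{prop_one-h}. Here the indices outside $I\cup I'$ correspond to aperiodic transition matrices ($h_i=1$).

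First I would record the basic counting identity. Since $\{H_i\alpha_i\}_{i=1}^{i=s}$ is a coset partition, for every $k\geq 0$ we have $\sum_{i=1}^{s}\frac{(A_i^k)_{1f_i}}{n^k}=1$. For an aperiodic index, $\lim_{k\to\infty}\frac{(A_i^k)_{1f_i}}{n^k}=\frac{1}{d_i}$ by Lemma~\ref{lem_matrixP}; for $i\in I$ the quantity vanishes unless $k\equiv m_i\,(\mathrm{mod}\,h)$ and tends to $\frac{h}{d_i}$ along that residue class by Lemma~\ref{lem-period}, and symmetrically for $i\in I'$ with $h'$ and $m_i'$.

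The decoupling step is the heart of the argument. I would let $k\to\infty$ along a fixed residue class $k\equiv a\,(\mathrm{mod}\,hh')$; since the sum is finite the limit passes term by term, giving
\[
1 = S + \sum_{\substack{i\in I\\ m_i\equiv a\,(h)}}\frac{h}{d_i} + \sum_{\substack{i\in I'\\ m_i'\equiv a\,(h')}}\frac{h'}{d_i}, \qquad S:=\sum_{i\notin I\cup I'}\frac{1}{d_i}.
\]
Writing $f(u)=\sum_{i\in I,\,m_i\equiv u}\frac{h}{d_i}$ for $u\in\mathbb{Z}/h$ and $g(v)=\sum_{i\in I',\,m_i'\equiv v}\frac{h'}{d_i}$ for $v\in\mathbb{Z}/h'$, this reads $f(u)+g(v)=1-S$ where $u\equiv a\,(h)$, $v\equiv a\,(h')$. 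By the coprimality of $h$ and $h'$ and the Chinese Remainder Theorem, as $a$ ranges over $\mathbb{Z}/hh'$ the pair $(u,v)$ ranges over all of $\mathbb{Z}/h\times\mathbb{Z}/h'$; hence $f(u)+g(v)$ is independent of $(u,v)$, which forces both $f$ and $g$ to be constant. Summing over the $h$ residue classes gives $h\cdot f=\sum_{i\in I}\frac{h}{d_i}$, so the common value is $f=\sum_{i\in I}\frac{1}{d_i}$, and likewise $g=\sum_{i\in I'}\frac{1}{d_i}$. This is precisely the single-period conclusion: the matrix $C$ attached to $h$ (rows indexed by $i\in I$, columns by $m=0,\dots,h-1$, one nonzero entry $\frac{h}{d_i}$ per row) has all column sums equal to $\sum_{i\in I}\frac{1}{d_i}$, and similarly for $h'$.

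I would then invoke verbatim the combinatorics from the proof of Lemma~\ref{prop_one-h}, applied to whichever period satisfies the hypothesis. No column of $C$ can be zero, so $r\geq h$ and $r'\geq h'$; thus the conditions $r=h$ or $r\leq 2(h-1)$ amount to $h\leq r\leq 2(h-1)$, and symmetrically for $r'$. If $r=h$ the matrix $C$ is, after reordering, diagonal, forcing all $d_i$ with $i\in I$ to coincide; if $h<r\leq 2(h-1)$ at least two columns carry a single nonzero entry, again producing two equal indices. The same applies to $h'$. In every case $\{H_i\alpha_i\}_{i=1}^{i=s}$ has multiplicity. The main obstacle is not the combinatorics, which are inherited from Lemma~\ref{prop_one-h}, but the careful justification of the CRT decoupling: one must verify that restricting $k$ to a residue class modulo $hh'$ is legitimate, that the limits of the periodic terms exist along such a subsequence precisely because $h$ and $h'$ are the only nontrivial periods, and that the interchange of limit and finite summation is valid. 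Once $f$ and $g$ are shown to be constant, the two periods are genuinely independent and the proof closes by two applications of the single-period lemma.
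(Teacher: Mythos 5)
Your proposal is correct and follows essentially the same route as the paper: both decouple the two periods via coprimality, show that each period's column sums are constant and equal to $\sum_{i\in I}\frac{1}{d_i}$ (resp.\ $\sum_{i\in I'}\frac{1}{d_i}$), and then invoke the single-period combinatorics of Lemma~\ref{prop_one-h}. Your CRT formulation with the functions $f$ and $g$ is just a streamlined version of the paper's argument, which realizes the same covering of all residue pairs by stepping along the arithmetic progressions $m+kh \pmod{h'}$ and $m+kh' \pmod{h}$ inside the $(r'+r)\times 2hh'$ matrix $D$.
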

 
 \begin{proof}[Proof of Lemma \ref{prop_several-h-prime}]
 	 	  	Assume with no loss of generality that $h' <h$. From the same argument as in the proof of Lemma \ref{prop_one-h}, $\sum\limits_{i \in I\cup I'}\lim\limits_{k\rightarrow\infty}\frac{(A_i^k)_{1f_i}}{n^k}=\sum\limits_{i \in I \cup I'}\frac{1}{d_i}$. We show that each period can be considered independently, that is each period has its own matrix $C$ as defined in the proof of Lemma \ref{prop_one-h}. 	We define a $(r'+r) \times L$ matrix $D$, 	where  $L=2hh'$,   in the following way. The first $r'$  rows  are  labelled by   right cosets $H_i\alpha_i$, where $i \in I'$, the last  $r$  rows  are  labelled by   right cosets $H_i\alpha_i$, where $i \in I$   and each column by  $m=0,1,2,...,h'-1,..,h-1,h,...,L-1$, and: 
 	 	  	\begin{equation*}
 	 	  	(D)_{ij} = \begin{cases}
 	 	  	0 & \text{if  } i\in I',\,  m_i\not\equiv j (mod\,h') ,\\
 	 	  		\lim\limits_{k\rightarrow\infty}\frac{(A_i^{m_i+h'k})_{1f_i}}{n^{m_i+h'k}} & \text{if } i\in I',\,  m_i\equiv j (mod\,h')\\
 	 	  			0 & \text{if  } i\in I,\, m_i\not\equiv j (mod\,h) ,\\
 	 	  	\lim\limits_{k\rightarrow\infty}\frac{(A_i^{m_i+hk})_{1f_i}}{n^{m_i+hk}} & \text{if } i\in I,\, m_i\equiv j (mod\,h).
 	 	  	 	  	\end{cases}
 	 	  	\end{equation*}
 	 	  	So,  the sum of elements in each column of $D$ is equal to $\sum\limits_{i \in I\cup I'}\frac{1}{d_i}$ and  from Lemma \ref{lem-period},  the non-zero entries  in $D$  have the form $\frac{h}{d_i}$ for $i \in I$ and  $\frac{h'}{d_i}$, for $i \in I'$. Let  $0\leq m \leq h'-1$, the minimal number such that   the sum of the entries of the $m$-th column is    $\sum\limits_{i\in J_0}\frac{h}{d_i}+\sum\limits_{i\in J_0'}\frac{h'}{d_i}$, where $\emptyset\neq J_0\subset I$ and $\emptyset\neq J_0' \subset I'$.  So, for every $ 0 \leq k \leq h'-1$,  the sum of the entries of the $(m+kh)$-th column is    $\sum\limits_{i\in J_0}\frac{h}{d_i}+\sum\limits_{i\in J_k'}\frac{h'}{d_i}$, and this implies  necessarily   $\sum\limits_{i\in J_0'}\frac{h'}{d_i}=\sum\limits_{i\in J_1'}\frac{h'}{d_i}=...=\sum\limits_{i\in J_{h'-1}'}\frac{h'}{d_i}$. We show that 
 	 	  	  $\{\frac{h'}{d_i}\mid i\in J_0'\}$,  $\{\frac{h'}{d_i}\mid i\in J_1'\}$,..., $\{\frac{h'}{d_i}\mid i\in J_{h'-1}'\}$ appear in the first $h'$ columns of $D$ (not necessarily in this order).	 Let  $ 0 \leq k,l \leq h'-1$, $k\neq l$. Assume by contradiction that $m+kh\equiv m+lh\, (mod\,h')$. So, $h'$ divides $h(k-l)$. As  	$h$ and $h'$ are coprime, $h'$ divides $k-l$, a contradiction. So, for every   $ 0 \leq k,l \leq h'-1$, $k\neq l$,  $m+kh\not\equiv m+lh\, (mod\,h')$. As there are exactly $h'$ values, these correspond to $0,1,...,h'-1\,(mod\,h')$, and  $\{\frac{h'}{d_i}\mid i\in J_0'\}$,  $\{\frac{h'}{d_i}\mid i\in J_1'\}$,..., $\{\frac{h'}{d_i}\mid i\in J_{h'-1}'\}$ appear in the first $h'$ columns of $D$ with $\sum\limits_{i\in J_0'}\frac{h'}{d_i}=...=\sum\limits_{i\in J_{h'-1}'}\frac{h'}{d_i}$. Furthermore, $\sum\limits_{i\in J_0'}\frac{h'}{d_i}=...=\sum\limits_{i\in J_{h'-1}'}\frac{h'}{d_i}=\sum\limits_{i\in I'}\frac{1}{d_i}$. Indeed, on one hand, the sum of elements in the first $r'$ rows and $h'$ columns  is equal to  $h'\,\sum\limits_{i\in J_0'}\frac{h'}{d_i}$ and on the second hand, it is equal $\sum\limits_{i\in I'}\frac{h'}{d_i}$.
 	 	  	 	  Using the same argument,  for every $ 0 \leq k \leq h-1$,  the sum of the entries of the $(m+kh')$-th column is    $\sum\limits_{i\in J_k}\frac{h}{d_i}+\sum\limits_{i\in J_0'}\frac{h'}{d_i}$, and this implies  necessarily    	  
 	 	  	  	 	  	  $\sum\limits_{i\in J_0}\frac{h}{d_i}=\sum\limits_{i\in J_1}\frac{h}{d_i}=...=\sum\limits_{i\in J_{h-1}}\frac{h}{d_i}$. We show that 
 	 	  	  	 	  $\{\frac{h}{d_i}\mid i\in J_0\}$,  $\{\frac{h}{d_i}\mid i\in J_1\}$,..., $\{\frac{h}{d_i}\mid i\in J_{h-1}\}$ 
 	 	  	  	 	   appear in the first $h$ columns of $D$ (not necessarily in this order). Let  $ 0 \leq k,l \leq h-1$, $k\neq l$. Assume by contradiction that $m+kh'\equiv m+lh'\, (mod\,h)$. So, $h$ divides $h'(k-l)$. As  	$h$ and $h'$ are coprime, $h$ divides $k-l$, a contradiction. So, for every   $ 0 \leq k,l \leq h-1$, $k\neq l$,  $m+kh'\not\equiv m+lh'\, (mod\,h)$. As there are exactly $h$ values, these correspond to $0,1,...,h-1(mod\,h)$, and  $\{\frac{h}{d_i}\mid i\in J_0\}$,  $\{\frac{h}{d_i}\mid i\in J_1\}$,..., $\{\frac{h}{d_i}\mid i\in J_{h-1}\}$  appear in the first $h$ columns of $D$, with $\sum\limits_{i\in J_0}\frac{h}{d_i}=\sum\limits_{i\in J_1}\frac{h}{d_i}=...=\sum\limits_{i\in J_{h-1}}\frac{h}{d_i}$. Furthermore, $\sum\limits_{i\in J_0}\frac{h}{d_i}=...=\sum\limits_{i\in J_{h-1}}\frac{h}{d_i}=\sum\limits_{i\in I}\frac{1}{d_i}$.
 	 	  	  	 	  	 So,  each period has its own matrix $C$ and we apply the results of Lemma \ref{lem-period}.
 	 	  	   \end{proof}

\end{document}